\documentclass[absolute]{mymathart}
\usepackage{mymathmacros}

 \newtheoremstyle{numberedstyle}
   {9pt}
   {9pt}
   {\normalfont}
   {}
   {\bfseries}
   {.}
   {\newline}
   {}

\newtheorem{thm}{Theorem}[section]%
\newtheorem{prop}[thm]{Proposition}%

\theoremstyle{numberedstyle}

\newcommand{\B}{\mathcal{B}}
\renewcommand{\H}{\mathbb{H}}

\usepackage{hyperref}

\title{The escaping set of the exponential}
\author{Lasse Rempe}
\address{Dept.\ of Mathematical Sciences, University of Liverpool, Liverpool L69 7ZL, UK}
\email{l.rempe@liverpool.ac.uk}
\thanks{Supported by EPSRC Fellowship EP/E052851/1.}
\subjclass[2000]{Primary 37F10; Secondary 30D05,37F10,54F15}

\newcommand{\s}{\underline{s}}

\begin{document} 
 \begin{abstract}
  We show that the set $I(f)$ of points that converge to infinity under
   iteration of the exponential map $f(z)=\exp(z)$ is a connected subset
   of the complex plane. 
 \end{abstract}
 
 \maketitle

 \section{Introduction}
  If $f:\C\to\C$ is an entire transcendental function, then its
   \emph{escaping set} $I(f)$ is the set of points that
   tend to infinity under iteration:
   \[ I(f) = \{z\in\C: f^n(z)\to \infty\}. \]

  For the dynamically simplest entire functions, such as 
   exponential maps of the form $f(z)=\exp(z)+a$ with $a<-1$, 
   the escaping set is the disjoint union of uncountably many
   curves to infinity, each of which is a connected component of $I(f)$.
   (In particular, $I(f)$ is disconnected while $I(f)\cup\{\infty\}$ is
    connected and path-connected.) Eremenko
    \cite{alexescaping} conjectured that every connected component of
    $I(f)$ is unbounded for every transcendental entire function $f$.
    Despite recent progress
    (compare e.g.\ \cite{eremenkoproperty,ripponstallard,strahlen}), 
    this question is still very much open. 

  In view of this, the escaping set is usually viewed very much as 
   a set that is likely to be disconnected. However,
   Rippon and Stallard
   \cite{ripponstallard} proved that the escaping set of an entire function 
   with a
   multiply-connected wandering domain is in fact connected. They have
   since extended this result to much larger classes of entire functions
   \cite{ripponstallardsmallgrowth}.

 These examples are quite different from the exponential maps 
  mentioned above in that they do not belong to the 
  \emph{Eremenko-Lyubich class} 
    \[ \B := \{f:\C\to\C\text{ transcendental, entire}: 
                \sing(f^{-1})\text{ is bounded}\} \] 
   (where $\sing(f^{-1})$ denotes the set of critical and asymptotic 
   values of $f$). We note that, if $f\in\B$, then $I(f)$ is a subset of the
   Julia set $J(f)$ 
   \cite[Theorem 1]{alexmisha}. Bergweiler (personal communication) 
   asked whether the escaping set of 
   a function in $\B$ can be connected, and more precisely whether this 
   might be the case for 
   the function $f(z)=\pi\sin(z)$. While Mihaljevic-Brandt 
   \cite{helenaconjugacy} has given a negative answer to the latter,
   Rippon and Stallard observed that, for the function $f(z)=(\cosh(z))^2$, 
   the escaping set is connected. Indeed, 
   the union of the 
   real axis with all its iterated preimages is path-connected
   (and clearly dense in $I(f)$).

 In contrast to this example, for the exponential map
  $f(z)=\exp(z)$ every path-connected
  component of the escaping set is known
  to be a single curve to $\infty$ that is relatively closed and
  nowhere dense in $I(f)$
  (see Proposition \ref{prop:pathcomponents}). 
  It may seem plausible that these path-connected
  components are also the connected components of $I(f)$, but 
  we show that the situation is rather different.

 \begin{thm}[Escaping set of the exponential] \label{thm:main}
  Let $a\in(-1,\infty)$ and consider the function
   $f(z)=\exp(z)+a$. Then 
   $I(f)$ is a connected subset of the plane. 
 \end{thm}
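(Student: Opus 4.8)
The plan is to argue by contradiction, converting a hypothetical separation of $I(f)$ into a statement about the topology of the sphere that the hair structure rules out. By the classification of path-components (Proposition \ref{prop:pathcomponents}), $I(f)$ is a disjoint union of \emph{hairs}: each path-component is a single curve tending to $\infty$, relatively closed and nowhere dense in $I(f)$. Suppose, for a contradiction, that $I(f)=A\sqcup B$ with $A,B$ nonempty and relatively closed (hence relatively open). Since each hair is connected, it lies entirely in $A$ or in $B$; thus $A$ and $B$ are unions of hairs, and the whole difficulty is to show that the hairs cannot be two-coloured in a relatively clopen fashion. Note that for $a<-1$ such a colouring \emph{does} exist — each hair is its own component — so the argument must use the hypothesis $a>-1$ in an essential way.

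\textbf{Density and the residual set.} The first place the hypothesis enters is through the singular value. The only element of $\sing(f^{-1})$ is the asymptotic value $a$, and since $a\in\mathbb{R}$ and $f(x)=\exp(x)+a>x$ for every real $x$ when $a>-1$, the singular orbit escapes; consequently $J(f)=\C$ and, as $I(f)$ is dense in $J(f)$, the escaping set is dense in $\C$. Write $N:=(\C\cup\{\infty\})\setminus I(f)$ for the residual set of non-escaping points together with $\infty$. Taking closures in the sphere, a separation $I(f)=A\sqcup B$ gives $\overline{A}\cup\overline{B}=\C\cup\{\infty\}$ while $\overline{A}\cap\overline{B}\subseteq N$, since any point of $I(f)$ lying in both closures would belong to $A\cap B=\emptyset$. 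Hence $A$ and $B$ lie in distinct complementary components of $N$, so that $N$ separates the sphere. The theorem is thereby reduced to the assertion that the residual set $N$ does \textbf{not} separate $\C\cup\{\infty\}$; equivalently, that $\infty$ is not a cut point of the path-connected set $I(f)\cup\{\infty\}$.

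\textbf{The gluing mechanism.} To prove non-separation I would show that the \emph{escaping endpoints} of the hairs glue the bouquet together. For $a<-1$ the endpoints are non-escaping and lie in $N$, which is exactly what lets $N$ cut the hairs apart; for $a>-1$ the dynamics forces a dense set of endpoints to escape, and these escaping endpoints must be shown to approach one another so tightly that no separating continuum can be fitted into $N$. The engine for this is the hyperbolic expansion supplied by a Ma\~n\'e-type theorem (in the setting introduced for this purpose): in logarithmic coordinates $f$ is expanding with bounded distortion along orbits that remain in the tract, so two hairs whose addresses $\s$ agree to high order shadow each other — together with their endpoints — on long initial segments, with exponentially small separation. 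Feeding these estimates into a plane-topology argument of Janiszewski type (exploiting that a closed set all of whose subcontinua are non-separating cannot separate the sphere) forces $\overline{A}\cap\overline{B}$ to meet $I(f)$ after all, contradicting the separation.

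\textbf{Main obstacle.} The principal difficulty is precisely this last step: turning the metric shadowing of nearby hairs and their endpoints into the topological conclusion that $N$ is non-separating. Two points require care. First, not every endpoint escapes — repelling periodic points, for instance, are endpoints lying in $N$ — so one cannot simply invoke a ``full'' Cantor-bouquet (straight-brush) structure, which would be connected outright; the escaping endpoints must instead be shown to be \emph{dense enough} and suitably distributed to compensate. Second, the accumulation of hairs from both $A$ and $B$ on a single point of $N$ is by itself consistent with a separation (that point merely lies in $N$), so the contradiction cannot be extracted pointwise: it must come from a genuinely two-dimensional separation argument on the sphere, which is exactly where the bounded-distortion control of the geometry of hairs near their escaping endpoints becomes indispensable.
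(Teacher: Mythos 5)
Your reduction to the statement that $N=(\C\cup\{\infty\})\setminus I(f)$ does not separate the sphere is formally correct but contentless: since $I(f)$ is by definition the complement of $N$, ``$N$ does not separate the sphere'' is a verbatim restatement of the theorem. Everything therefore rests on your ``gluing mechanism'', and that step is not carried out --- you yourself flag it as the main obstacle. As it stands this is a plan rather than a proof, and the plan has concrete difficulties. The plane-separation tools you invoke (Janiszewski-type theorems, ``a closed set all of whose subcontinua are non-separating cannot separate the sphere'') apply to \emph{closed} sets, but $N$ is neither closed nor open: non-escaping points (for instance repelling periodic points) are dense in $J(f)=\C$, so $N$ and $I(f)$ are both dense, and no component-of-a-closed-separator argument applies to $N$ itself. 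The emphasis on escaping endpoints is also misplaced: density of escaping endpoints would not by itself preclude a relatively clopen two-colouring of the hairs, and the actual mechanism in the paper makes no reference to endpoints or to Ma\~n\'e-type expansion estimates.

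The paper's proof is direct and elementary rather than by contradiction. It takes the iterated preimage curves $\gamma_k^{\pm}$ of the ray $(-\infty,a)$ under the inverse branches $L_{\pm}$ into the strips of height $\pi$ adjacent to $\R$. Devaney's observation (Proposition \ref{prop:hausdorff}) is that $\gamma_k^{\sigma}\cup\{\infty\}$ converges in the Hausdorff metric to the whole continuum $X^{\sigma}\cup\{\infty\}$, so every $\gamma_j^{\sigma}$ lies in the closure of $\bigcup_{k\geq k_0}\gamma_k^{\sigma}$ for every $k_0$; a short clopen argument then shows that $\Gamma^{\sigma}=\bigcup_k\gamma_k^{\sigma}$ is connected even though each $\gamma_k^{\sigma}$ is a separate path-component of $I(f)$. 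This is exactly where the hypothesis $a>-1$ enters (the orbit of $a$ escapes along $\R$, so $(-\infty,a)\subset I(f)$ and the accumulation described above takes place inside $I(f)$). Translating by $2\pi i\Z$ and pulling back repeatedly, using the common points $(2k+1)\pi i$ to keep the union connected, produces a connected subset of $I(f)$ containing $\bigcup_n f^{-n}(-1)$, hence dense in $J(f)=\C$ and in particular dense in $I(f)$; a set squeezed between a connected set and its closure is connected. If you wish to salvage your framework, the missing ingredient is precisely such an explicit connected dense subset of $I(f)$: once one is exhibited, no separation $A\sqcup B$ can exist, because that subset would have to lie entirely in $A$ or in $B$ while its closure meets both.
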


 The proof is elementary; the main idea is to consider a countable sequence of 
  preimage components of the negative real axis that was studied by
  Devaney \cite{devaneyknaster}
  in his construction of an indecomposable continuum. (See
  Figure \ref{fig:devaney}.)
  Each of these components is an arc tending 
  to infinity in both directions, but we shall show that their union
  is connected. Theorem \ref{thm:main} then follows relatively easily.

 \subsection*{Basic notation.} As usual, we denote the complex plane by
  $\C$, and the Riemann sphere by $\Ch=\C\cup\{\infty\}$. Closures and
  boundaries will be understood to be 
  taken in $\C$, unless explicitly stated otherwise. 

 \subsection*{Acknowledgments.} I would like to thank
  Walter Bergweiler, Mary Rees, Gwyneth Stallard and Phil Rippon for
  interesting discussions.

 \section{The Devaney continuum}
  For the rest of the article, fix $a\in (-1,\infty)$ and set
   $f(z) = \exp(z) + a$. Then $f^n(x)\to\infty$ for all $x\in\R$. 

 Let $\H_+$ and $\H_-$ denote the upper and lower half planes, respectively.
  Let $S_+$ denote the strip at imaginary parts between $0$ and $\pi$;
  similarly $S_-$ is the strip at imaginary parts between
  $0$ and $-\pi$. For $\sigma\in\{+,-\}$, let
       \[ L_{\sigma}:\H_{\sigma}\to S_{\sigma} \]
  be the branch of $f^{-1}$ taking values in $S_{\sigma}$.
  $L_{\sigma}$ is a confomal isomorphism that extends to a homeomorphism
  between $\cl{\H_{\sigma}}\setminus\{a\}$ and $\cl{S_{\sigma}}$; we denote
  this extension also by $L_{\sigma}$. 

 Define $\gamma_0^{\sigma} := (-\infty,a)$, and 
   inductively
     \[ \gamma_{k+1}^{\sigma} := L_{\sigma}(\gamma_k^{\sigma}). \]
   Then each $\gamma_{k}^{\sigma}$, $k\geq 1$, is an injective
    curve tending to infinity in both directions. (Also, $\gamma_k^-$ is
    the reflection of $\gamma_k^+$ in the real axis for all $k$.)

   We define sets $\Gamma^{\sigma}$ and $X^{\sigma}$ by
     \[ \Gamma^{\sigma} := \bigcup_{k\geq 0} \gamma_k^{\sigma}, \quad
         X^{\sigma} := \cl{\Gamma^{\sigma}}. \] 
   See Figure \ref{fig:devaney} for a picture of the set
   $X^+$. We require the following key fact
   \cite[p.\ 631]{devaneyknaster} 
  
  \begin{prop}[Hausdorff limit of $\gamma^{\pm}_k$] \label{prop:hausdorff}
   Let $\sigma\in\{+,-\}$. The set $X^{\sigma}\cup\{\infty\}$ 
    is the Hausdorff limit (on the Riemann sphere $\Ch$) 
    of the sequence $(\gamma^{\sigma}_k\cup\{\infty\})$. 

   (In particular, $\bigcup_{k\geq k_0} \gamma^{\sigma}_k$ is dense in
    $X^{\sigma}$ for all $k_0$.)
  \end{prop}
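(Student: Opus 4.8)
The plan is to read the statement as Hausdorff convergence $Y_k\to Y$ of compact subsets of $\Ch$, where I write $Y:=X^\sigma\cup\{\infty\}$ and $Y_k:=\gamma_k^\sigma\cup\{\infty\}$, and to verify it through the two Kuratowski conditions. Because $\gamma_k^\sigma\subseteq\Gamma^\sigma\subseteq X^\sigma$ we have $Y_k\subseteq Y$ for every $k$, so one half — that every limit of a convergent sequence $z_k\in Y_k$ lies in $Y$ — is automatic: a finite limit lies in $\cl{\Gamma^\sigma}=X^\sigma$, and the only other possible limit is $\infty$. The whole content is therefore the covering condition, that for every $\epsilon>0$ the set $Y$ lies in the spherical $\epsilon$-neighbourhood of $Y_k$ for all large $k$. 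Here the point $\infty$ does the work at infinity: since each $\gamma_k^\sigma$ is a curve to $\infty$ we have $\infty\in Y_k$, and all points of $Y$ of large modulus are spherically close to it. Fixing $R>0$ so that $\{|z|>R\}$ lies in a small spherical ball about $\infty$, the covering condition reduces to a purely bounded, Euclidean statement: for every $R,\epsilon>0$ there is $K$ with $X^\sigma\cap\{|z|\le R\}\subseteq B(\gamma_k^\sigma,\epsilon)$ for all $k\ge K$, where $B(\,\cdot\,,\epsilon)$ denotes the $\epsilon$-neighbourhood.

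To prove this covering I would use the self-similarity $\gamma_k^\sigma=L_\sigma^{\,k}(\gamma_0^\sigma)$ to reduce to a single base case. The set $X^\sigma\cap\{|z|\le R\}$ is compact, so it suffices to approximate each point $p$ of it; choosing $q\in\Gamma^\sigma$ with $|p-q|$ small, say $q\in\gamma_m^\sigma$, I write $q=L_\sigma^{\,m}(q_0)$ with $q_0\in\gamma_0^\sigma=(-\infty,a)$. If one knows that the arches accumulate on $\gamma_0^\sigma$ — that there are points $y_n\in\gamma_n^\sigma$ with $y_n\to q_0$ — then, since $L_\sigma^{\,m}$ is continuous at $q_0$ (its finite orbit avoids the singular value $a$), the points $L_\sigma^{\,m}(y_n)\in\gamma_{n+m}^\sigma$ converge to $q$. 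As $m$ is fixed and $n$ ranges over all large integers, this produces a point of $\gamma_k^\sigma$ near $q$, hence near $p$, for every sufficiently large $k$; a finite subcover of $X^\sigma\cap\{|z|\le R\}$ together with the corresponding maximal threshold then yields the uniform $K$. Note that this reduction needs only continuity of the finite iterates $L_\sigma^{\,m}$ and compactness, not any quantitative contraction. Thus the whole proposition rests on the base case: \emph{for every $x_0<a$ the arches $\gamma_n^\sigma$ pass arbitrarily close to $x_0$ as $n\to\infty$.}

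This base case is the main obstacle and the only place requiring genuine dynamics. By definition $\gamma_n^\sigma$ consists of the points whose first $n$ iterates follow the constant itinerary $\sigma\sigma\cdots\sigma$ (remaining in $\cl{S_\sigma}$) and whose $n$-th image lands on $\gamma_0^\sigma=(-\infty,a)$; so the hard part is to produce, arbitrarily close to a given $x_0<a$, points admitting such a long admissible excursion that returns to the negative real axis. The mechanism I would exploit is expansion: because the singular value $a$ escapes — indeed $f^n(x)\to\infty$ for every real $x$ — the Fatou set is empty and $J(f)=\C$, so the iterates $f^n$ blow up any small disc about $x_0$ until its image sweeps across a fundamental strip and meets $(-\infty,a)$ along the correct branch, and pulling back gives the desired point of $\gamma_n^\sigma$ near $x_0$. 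Making the itinerary control and the choice of branch precise (equivalently, appealing to Devaney's explicit description of the arches in \cite{devaneyknaster}) is the delicate step. It is worth recording that the non-degeneracy of the limit is caused precisely by the unbounded tails of the arches, which is why compactifying with $\infty$ is both natural and necessary: it is exactly what makes those tails converge and confines all remaining work to bounded sets.
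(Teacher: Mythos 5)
Your reduction of the proposition to the single statement that the arcs $\gamma_n^{\sigma}$ accumulate on every point of $\gamma_0^{\sigma}=(-\infty,a)$ is correctly organised: the Kuratowski half that follows from $\gamma_k^{\sigma}\subset X^{\sigma}$, the passage to bounded sets via the point $\infty$, the compactness/finite-subcover step, and the continuity of $L_{\sigma}^{m}$ along an orbit avoiding $a$ are all fine. But the base case, which you yourself identify as the only place where dynamics enters, carries the entire content of the proposition, and the mechanism you propose for it does not work as stated. The blow-up property coming from $J(f)=\C$ tells you that $f^{n}(B(x_0,\epsilon))$ eventually covers the plane up to one point, so it certainly meets $(-\infty,a)$; but membership in $\gamma_n^{\sigma}$ is not the condition $f^{n}(z)\in(-\infty,a)$ alone --- it is the condition $z\in L_{\sigma}^{\,n}\bigl((-\infty,a)\bigr)$, i.e.\ that every intermediate iterate follows the prescribed branch. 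A covering statement gives no control over which of the countably many preimage branches of $(-\infty,a)$ inside $f^{n}(B(x_0,\epsilon))$ pulls back into $B(x_0,\epsilon)$, and ``meets $(-\infty,a)$ along the correct branch'' is precisely the assertion to be proved; appealing to Devaney's description of the arcs at this point is essentially citing the result you were asked to prove. So there is a genuine gap at the crux.

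The missing idea, which is what the paper's proof supplies, is to run the argument \emph{forwards} along the escaping real orbit rather than blowing up a disc. Since $f^{j}(x_0)\to+\infty$ along the positive real axis, for large $n$ the point $w_n:=f^{n-1}(x_0)+\sigma\pi i$ lies on $\gamma_1^{\sigma}$ at distance $\pi$ from $f^{n-1}(x_0)$. One checks that the branch $\phi_{n-1}$ of $f^{-(n-1)}$ sending $f^{n-1}(x_0)$ back to $x_0$ is defined on the disc of radius $2\pi$ about $f^{n-1}(x_0)$ (the singular orbit eventually avoids these discs), that each single pullback step coincides with $L_{\sigma}$ on the part of that disc lying in $\cl{\H_{\sigma}}$, so that $\phi_{n-1}(w_n)\in\gamma_n^{\sigma}$, and that the expansion of $f$ in a right half-plane forces $\max|\phi_{n-1}'|\to 0$, whence $\phi_{n-1}(w_n)\to x_0$. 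This pullback-with-contraction argument is the genuine dynamical input your sketch lacks; note that, applied to an arbitrary $z_1\in\gamma_m^{\sigma}$ rather than to $x_0\in\gamma_0^{\sigma}$, it proves the proposition directly and makes your preliminary reduction to $\gamma_0^{\sigma}$ unnecessary.
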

  \begin{proof}
   Let $z_0\in X^{\sigma}\cup\infty$, and let 
    $U$ be a neighborhood of $z_0$ in $\Ch$. We need 
    to show that $\gamma^k\cup U\neq\emptyset$ for all sufficiently large $k$. 

   By definition of $X^{\sigma}$, the set $U$ contains some 
    $z_1\in \Gamma^{\sigma}$. 
    Let $D_n$ denote the (Euclidean) disk of radius
    $2\pi$ around $f^n(z_1)$. It is elementary to see --- using 
    the fact that $f$ is expanding in a suitable right half plane ---
    that there is
    $n_0$ with
    $f^j(a)\notin D_n$ for all $n\geq n_0$ and $j<n$. We may assume that
    $n_0$ is chosen sufficiently large to ensure that also 
    $f^{n}(z_0)\in\R$ for $n\geq n_0$. 

   Hence for $n\geq n_0$, there is a branch
    $\phi_n:D_n\to\C$ of $f^{-n}$ with
    $\phi_n(f^n(z_0))=z_0$. Clearly
    $\max_{z\in D_n}|\phi_n'(z)|\to 0$ as $n\to\infty$ (again due to 
    the expansion of $f$ in a right half plane). In particular,
    there is $k_0\geq n_0+1$ such that for $k\geq k_0$, the image
    of $\phi_{k-1}$ is contained in $U$. We then have 
    $\phi_{k-1}(f^{k-1}(z_0)+\pi i)\in \gamma^{k}\cap U$, 
    and the claim follows. 
  \end{proof} 

 \begin{figure}
  \begin{center}
   \includegraphics[width=.98\textwidth]{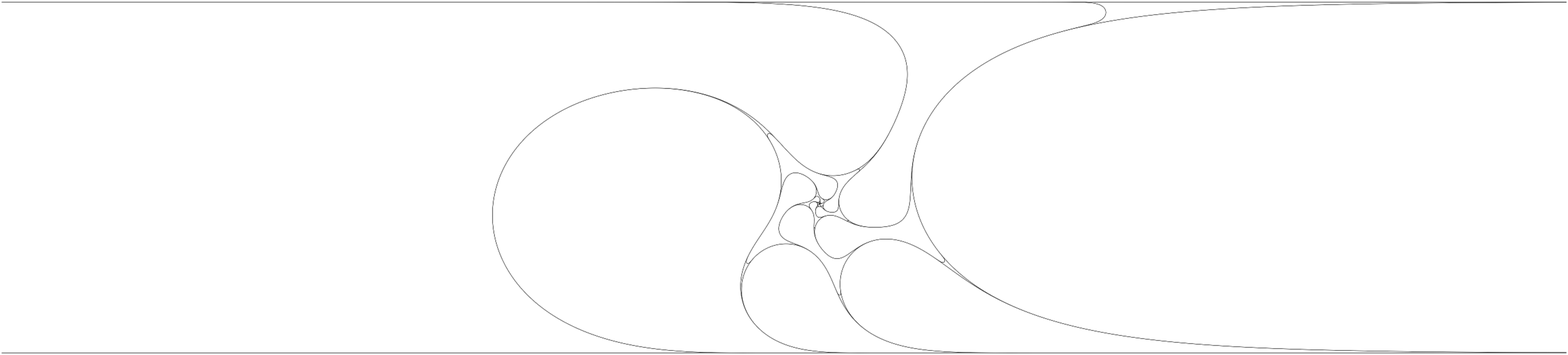}
  \end{center}
  \caption{\label{fig:devaney}The set $X^+$}
 \end{figure}

 \begin{prop}[$\Gamma^{\pm}$ connected] \label{prop:Gamma}
  The sets $\Gamma^+$ and $\Gamma^-$ are connected.
 \end{prop}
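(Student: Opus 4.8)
The plan is to argue by contradiction, exploiting a strong form of the accumulation statement that is actually established in the proof of Proposition \ref{prop:hausdorff}. By the reflection symmetry already noted (each $\gamma_k^-$ is the reflection of $\gamma_k^+$ in the real axis), it suffices to treat $\Gamma^+$. Suppose then that $\Gamma^+$ admits a separation $\Gamma^+ = A \sqcup B$ into two nonempty, disjoint, relatively open (hence also relatively closed) subsets. Since each $\gamma_k^+$ is a connected curve, it must lie entirely in $A$ or entirely in $B$; this partitions the indices $k\ge 0$ into two sets $K_A$ and $K_B$, with $A = \bigcup_{k\in K_A}\gamma_k^+$ and $B = \bigcup_{k\in K_B}\gamma_k^+$, both nonempty.

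The crux is the following consequence of Proposition \ref{prop:hausdorff}, which is stronger than mere density of the tails $\bigcup_{k\ge k_0}\gamma_k^+$: its proof shows that for every $p\in X^+$ and every neighborhood $U$ of $p$, one has $\gamma_k^+\cap U\neq\emptyset$ for \emph{all} sufficiently large $k$. In other words, every high-index curve passes arbitrarily close to every point of $X^+=\cl{\Gamma^+}$, and in particular close to every point of every other curve. For a finite point $p$ the point $\infty$ is irrelevant here, since the spherical distance from $p$ to $\infty$ is bounded below; thus the approximating points eventually lie in the finite part $\gamma_k^+$, and I obtain, for each finite $p\in X^+$, points $z_k\in\gamma_k^+$ with $z_k\to p$ along all large $k$.

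With this in hand the contradiction is immediate. At least one of $K_A,K_B$ is infinite; say $K_B$ is (the other case being symmetric). Choose any $p\in A$, say $p\in\gamma_j^+$ with $j\in K_A$. Applying the accumulation statement along the infinitely many indices $k\in K_B$ yields points $z_k\in\gamma_k^+\subseteq B$ with $z_k\to p$, so $p\in\cl{B}\cap\Gamma^+=B$ because $B$ is relatively closed. But $p\in A$ and $A\cap B=\emptyset$, a contradiction. Hence $\Gamma^+$, and by reflection $\Gamma^-$, is connected.

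The only genuinely delicate point is extracting this uniform-in-$k$ accumulation (for all large $k$, rather than merely for the union) from Proposition \ref{prop:hausdorff}; once that is available, the separation argument is routine. I would therefore state this consequence explicitly, since the weaker density of $\bigcup_{k\ge k_0}\gamma_k^+$ alone would not force the approximating curves $z_k$ to have the color needed to break the separation.
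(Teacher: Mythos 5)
Your proof is correct and rests on exactly the same key fact as the paper's: Proposition~\ref{prop:hausdorff} yields that every neighbourhood of every (finite) point of $X^{+}$ meets $\gamma_k^{+}$ for \emph{all} sufficiently large $k$, and this, combined with the connectedness of each individual curve $\gamma_k^{\sigma}$, is what rules out a separation. The paper packages the same idea via the open-set characterisation of connectedness (any open $U$ meeting $\Gamma^{\sigma}$ with $\partial U\cap\Gamma^{\sigma}=\emptyset$ must contain every high-index curve, hence $X^{\sigma}\supset\Gamma^{\sigma}$ lies in the closure of $U$), which is only a cosmetic difference from your two-colour separation argument.
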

 \begin{proof}
  Let $\sigma\in \{+,-\}$.
   Suppose that $U\subset\C$ is an open set with
    $\Gamma^{\sigma}\cap U\neq\emptyset$ and
   $\Gamma^{\sigma}\cap \partial U=\emptyset$. We need to show that 
   $\Gamma^{\sigma}\subset U$.

   Let $z_0\in U\cap\Gamma^{\sigma}$. Then 
   by Proposition \ref{prop:hausdorff}, there is $k_0$ such that
   $\gamma_k\cap U\neq\emptyset$ for all $k\geq k_0$. Since
   $\gamma_k$ is connected, in fact $\gamma_k\subset U$. Thus 
         \[ \Gamma^{\sigma}\subset X^{\sigma} =
             \cl{\bigcup_{k\geq k_0} \gamma^{\sigma}_k} \subset \cl{U}. \] 
    By choice of $U$, we hence have $\Gamma^{\sigma}\subset U$, as desired.
 \end{proof}

 \section{Proof of the Theorem}

  By Proposition \ref{prop:Gamma}, the
    union of $\Gamma^+$ and $\Gamma^-$ connects the horizontal line 
    $\gamma_1^-$ at imaginary
    part $-\pi$ with $\gamma_1^+$ at imaginary part $\pi$. Since
    the set $I(f)$ is $2\pi i$-periodic, it follows that the set 
  \[ Y := \bigcup_{\sigma\in \{+,-\}, k\in\Z} \bigl(\Gamma^{\sigma}+2\pi i k\bigr) \]
  is a connected subset of $I(f)$.
  $Y$ contains all points whose imaginary parts are odd multiples of $\pi$;
  i.e. $f^{-1}\bigl((-\infty,a)\bigr)$. 
  
 \begin{prop}[Preimages of $Y$]
  Set $Y_0 := Y$ and inductively
   $Y_{j+1} := Y_j \cup f^{-1}(Y_j)$. Then $Y_j$ is connected for all $j$.
 \end{prop}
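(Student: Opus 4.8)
The plan is to prove this by induction on $j$, the base case $j=0$ being precisely the statement, established above, that $Y_0 = Y$ is connected. So assume that $Y_j$ is connected; I want to show that $Y_{j+1} = Y_j \cup f^{-1}(Y_j)$ is connected. The key structural observation is that $f^{-1}(Y_j)$ is a union of pieces, one in each horizontal strip $S_k := \{z : (2k-1)\pi < \operatorname{Im} z < (2k+1)\pi\}$ (or its closure), obtained by applying the relevant branch of $f^{-1}$; and each such piece is connected, being the continuous image under a branch of $f^{-1}$ of the connected set $Y_j$ (restricted to a suitable half-plane). Indeed $Y_j$ is $2\pi i$-periodic and symmetric about $\R$, so it suffices to understand one branch.

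The heart of the matter is to show that each connected piece $V$ of $f^{-1}(Y_j)$ meets $Y_j$ itself, so that gluing it on does not disconnect anything: if every component of $f^{-1}(Y_j)$ intersects the connected set $Y_j$, then $Y_{j+1}$, being a union of connected sets each meeting the connected set $Y_j$, is connected. To see that a component meets $Y_j$, I would use the fact that $Y \subseteq Y_j$ contains all of $f^{-1}\bigl((-\infty,a)\bigr)$, i.e. every point with imaginary part an odd multiple of $\pi$. Now $(-\infty, a) \subseteq Y$, so any branch of $f^{-1}$ applied to $Y_j$ (which contains $(-\infty,a)$, or rather contains the full line through it after periodization) picks up preimages of the negative real axis, which are exactly the horizontal lines at odd multiples of $\pi$ — and these lie in $Y$. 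More carefully: each branch $L$ of $f^{-1}$ maps $Y_j$ into a closed strip $\cl{S_k}$, and since $(-\infty,a)$ (translated by $2\pi i k$ for each $k$, still in $Y_j$) lies in the domain of $L$, its image is one of the boundary lines $\operatorname{Im} z = (2k\pm1)\pi$, which is contained in $Y_0 = Y \subseteq Y_j$. Hence $L(Y_j) \cap Y_j \neq \emptyset$, and since $L(Y_j)$ is connected this does the job.

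Putting it together: $Y_{j+1} = Y_j \cup \bigcup_k L_k(Y_j)$ where $L_k$ ranges over the countably many branches of $f^{-1}$ (one per strip), each $L_k(Y_j)$ is connected as the continuous image of the connected set $Y_j$, and each $L_k(Y_j)$ meets $Y_j$ because it contains a horizontal line at an odd multiple of $\pi$, which lies in $Y \subseteq Y_j$. A finite or infinite union of connected sets all sharing a common point with a fixed connected set is connected, so $Y_{j+1}$ is connected, completing the induction.

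The main obstacle I anticipate is bookkeeping with the branches of $f^{-1}$ and making sure the "image of $Y_j$ under a branch" is well-defined — one must check that $Y_j$, intersected with the relevant half-plane where a given branch is defined, is itself connected (or at least that its image pieces still each touch a boundary line). Since $Y_j \supseteq Y$ and $Y$ already fills the boundary lines $\operatorname{Im} z \in \pi\Z_{\mathrm{odd}}$, and $f$ maps such a line onto $(-\infty,a)$, one sees the branches match up along these lines: the closure of a strip-piece of $f^{-1}(Y_j)$ abuts $Y$ along its horizontal edges. So in fact one can argue slightly differently if connectedness of $Y_j \cap \H_\sigma$ is awkward: $f^{-1}(Y_j) \supseteq f^{-1}((-\infty,a))$ consists of all the odd-multiple-of-$\pi$ horizontal lines, which already lie in $Y$; and $f^{-1}(Y_j)$ is contained in the union of the closed strips, with each component of $f^{-1}(Y_j)$ touching one of these lines (because $Y_j$ is relatively closed in $\C$, so each component of its preimage in a closed strip extends to the boundary, where $f$ takes real values $\le a$, i.e. lands in $(-\infty,a)\subseteq Y_j$). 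This guarantees every component of $f^{-1}(Y_j)$ meets $Y \subseteq Y_j$, which is all that is needed.
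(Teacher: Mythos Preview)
Your approach is essentially the same as the paper's: induction on $j$, writing $Y_{j+1}=Y_j\cup\bigcup_k L_k(Y_j)$ for the branches $L_k$ of $f^{-1}$, noting each $L_k(Y_j)$ is connected as a continuous image, and checking each meets $Y_j$ at a point (or line) with imaginary part an odd multiple of $\pi$.

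The obstacle you anticipate is dissolved by the paper's choice of branches: it takes the branch cut along $[a,\infty)$, so that $L_k:\C\setminus[a,\infty)\to\{2\pi k<\operatorname{Im} z<2\pi(k+1)\}$. The point is that $Y_j\cap[a,\infty)=\emptyset$ for every $j$, by an easy induction using $f([a,\infty))\subset(a,\infty)$ (since $e^x>0$); hence $L_k$ is genuinely defined on all of $Y_j$, and $L_k(Y_j)$ is connected without further work. The paper then observes that $z_k:=(2k+1)\pi i$ lies in $Y\subset Y_j$ and also in $L_k(Y_j)$ (since $f(z_k)=a-1\in(-\infty,a)\subset Y_j$), which is exactly your intersection argument specialised to a single point. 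Your fallback fix, appealing to $Y_j$ being ``relatively closed in $\C$'', is not correct as stated: $Y_j$ is not closed (already $Y_0=Y$ is a union of the sets $\Gamma^{\pm}+2\pi i k$, whose closures $X^{\pm}+2\pi i k$ are strictly larger), so that line of reasoning would need repair---but with the correct branch cut it is unnecessary.
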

 \begin{proof}
  The proof is by induction on $j$. Note that $Y_j$ contains
  $Y$ for all $j$. 

  Let $k\in\Z$, and let
   $L_k:\C\setminus[a,\infty)\to\C$ be the branch of $f^{-1}$ that
   takes values with imaginary parts between $2\pi k$ and $2\pi(k+1)$. 
   Set $z_k := (2k+1)\pi i$. Then $z_k,f(z_k)\in Y\subset Y_j$,
   and hence $z_k\in Y_j\cap L_k(Y_j)$. As $L_k$ is a continuous function
   (and $Y_j$ is contained in its domain of definition),
   it follows from the induction hypothesis that $Y_j\cup L_k(Y_j)$ is 
   connected. Hence
     \[ Y_{j+1} = \bigcup_{k\in\Z} L_k(Y_j) \cup Y_j \]
   is connected, as claimed. 
 \end{proof}

 \begin{proof}[Proof of Theorem \ref{thm:main}]
  The set $\bigcup_{j\geq 0} f^{-n}(-1)\subset \bigcup_{j\geq 0} Y_j$
   is dense in the Julia set, and hence in the escaping set. Since
   $\bigcup_{j\geq 0} Y_j$ is a connected subset of $I(f)$, the claim follows. 
 \end{proof}

 We contrast our result with the following fact, mentioned in the 
  introduction.

 \begin{prop}[Path-connected components of $I(f)$] \label{prop:pathcomponents}
  Let $P$ be a path-connected component of $I(f)$, where
   again $f(z)=\exp(z)+a$, $a\in(-1,\infty)$. Then $P$ is
   relatively closed and nowhere dense in $I(f)$.
 \end{prop}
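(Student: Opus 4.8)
The plan is to prove Proposition~\ref{prop:pathcomponents} using the well-understood structure of path-components of $I(f)$ for exponential maps, which go under the name of \emph{hairs} or \emph{dynamic rays}. The starting point is the classification (due to Schleicher--Zimmer and, for general $a>-1$, to the structure theory of the Julia set of exponential maps): every path-connected component $P$ of $I(f)$ is either a ray $g_{\s}$ associated to an \emph{exponentially bounded} external address $\s=s_0s_1s_2\dots$, or such a ray together with its endpoint. In either case $P$ is contained in the closure of a single ray $\overline{g_{\s}}$, and $P$ is a curve to infinity: parametrised so that $f^n(g_{\s}(t))\to\infty$, with the ``tails'' $g_{\s}([t,\infty))$ lying in ever-larger right half planes. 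The two properties to establish are that $P$ is relatively closed in $I(f)$ and that $P$ is nowhere dense in $I(f)$.

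For \emph{relative closedness}, I would argue as follows. Suppose $z_0 \in I(f) \cap \overline{P}$; I must show $z_0 \in P$. Take a sequence $z_n \in P$ with $z_n \to z_0$. Since all the $z_n$ lie on the single ray $g_{\s}$ (or its closure), and since $f$ is expanding in a right half plane, the ray's parametrisation has the property that points far out along the ray escape ``uniformly fast'', whereas $z_0$, being escaping, escapes but its orbit is bounded for a definite number of steps. A cleaner route: use that $I(f)$ decomposes into the rays, and that on the Julia set the address map (itineraries with respect to the standard partition of $\C$ into horizontal strips) is continuous where defined; points of $I(f)$ all have a well-defined address, and $z \mapsto \text{address}(z)$ is continuous on $I(f)$. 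Hence $z_0$ has the same address $\s$ as the $z_n$, so $z_0$ lies on $\overline{g_{\s}}$; and since $z_0$ escapes, it is either on the ray $g_{\s}$ or is its (escaping) endpoint --- in all cases $z_0 \in P$, because $P$ was defined as the full path-component, which includes the endpoint precisely when the endpoint escapes. This gives $\overline{P}\cap I(f) = P$.

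For \emph{nowhere density}, the key point is that a single ray $g_{\s}$ is a curve to infinity and hence has empty interior in $\C$, so it certainly has empty interior in $I(f)$; but I must rule out $P$ being dense in some relatively open subset of $I(f)$. Here I would invoke Proposition~\ref{prop:hausdorff}-style density together with the indecomposable-continuum structure: around any point $w \in \overline{P}$ there are points of $I(f)$ lying on rays $g_{\underline{t}}$ with $\underline{t}\neq\s$ arbitrarily close to $w$ --- this follows because the set of rays accumulating at any point of a ray is nontrivial (the Devaney continuum phenomenon), or more simply because the preimages $f^{-n}(-1)$ used in the proof of Theorem~\ref{thm:main} are dense in $I(f)$ and, lying on the skeleton $\bigcup Y_j$, belong to rays with addresses having a pre-periodic tail, hence to path-components different from $P$ (which has a generic, typically aperiodic address). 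So every relatively open subset of $I(f)$ meets $I(f)\setminus P$, and since $I(f)\setminus P = I(f)\setminus \overline{P}$ (relative closedness), every relatively open set meets the complement of $\overline P$; thus $\overline{P}\cap I(f)$ has empty interior in $I(f)$, i.e.\ $P$ is nowhere dense.

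I expect the main obstacle to be making the \emph{relative closedness} argument fully rigorous without redeveloping the ray structure from scratch: one needs the continuity of the itinerary/address map on $I(f)$ and the fact that the endpoint of a ray belongs to the path-component exactly when it escapes. These are standard in the exponential-dynamics literature (Schleicher--Zimmer, Rottenfu\ss{}er--Schleicher, and Bara\'nski), but citing the precise statement in the generality of all $a>-1$ requires care; the nowhere-density half is comparatively soft once relative closedness and the density of $\bigcup_j f^{-n}(-1)$ are in hand.
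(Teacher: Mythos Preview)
Your overall strategy---cite the classification of path-components of $I(f)$ as dynamic rays, then derive the two properties---is exactly what the paper does. But your execution of both halves has gaps that the paper sidesteps by a much more direct observation.

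For relative closedness, you invoke continuity of the address map on $I(f)$. This is not literally true: at points whose orbit hits a strip boundary $\{\operatorname{Im} z = (2k+1)\pi\}$ (equivalently, points that eventually map to $\R$), the itinerary is discontinuous or multi-valued. This is precisely why the paper splits into two cases: either $P$ is a connected component of $f^{-n}(\R)$ for some $n$, or no point of $P$ ever maps to $\R$, in which case the closed-strip itinerary set $K_{\s}=\{z:\operatorname{Im} f^n(z)\in[(2s_n-1)\pi,(2s_n+1)\pi]\text{ for all }n\}$ is well defined and contains $P$.

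For nowhere density, your argument that preimages of $-1$ lie on path-components ``different from $P$ (which has a generic, typically aperiodic address)'' breaks down exactly when $P$ is one of the components of $f^{-n}(\R)$: then $P$ contains a dense set of such preimages, and you have not excluded the possibility that $P$ is relatively open somewhere.

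The paper avoids both issues with a single stroke. In each of the two cases it exhibits a set $K\subset\C$ that is \emph{closed and nowhere dense in $\C$} (a component of $f^{-n}(\R)$ in the first case, the itinerary set $K_{\s}$ in the second) and satisfies $P=K\cap I(f)$. Relative closedness is then immediate from closedness of $K$, and nowhere density follows because $I(f)$ is dense in $\C$ (here $J(f)=\C$), so any relatively open subset of $I(f)$ meeting $P$ would force $K$ to have interior. No appeal to continuity of addresses or to which rays contain preimages of $-1$ is needed.
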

 \begin{proof}
  The path-connected components of $I(f)$ (in fact, of the escaping set of
   any exponential map) are completely described in 
   \cite[Corollary 4.3]{markuslassedierk}. First of all, for any $n\geq 0$,
   each connected component
   of $f^{-n}(\R)$ is a path-connected component of $I(f)$.

  Every one of these is nowhere dense and closed in $\C$, and in 
   particular relatively closed in $I(f)$.  

  Now suppose that $z_0\in I(f)$ never maps to the positive real axis
   under iteration. Let $\s=s_0 s_1 s_2 \dots$ be the sequence of
   integers such that $\im f^n(z_0)\in \bigl((2s_n-1)\pi,(2s_n+1)\pi\bigr)$
   for all $n$. It follows from the assumption on $z_0$ that 
   $\s$ must contain infinitely many nonzero entries 
    (see
   \cite[Theorem on p.\ 632]{devaneyknaster}).

  Let $K=K_{\s}$ be the set of all points $z\in J(f)$ with
   $\im f^n(z)\in \bigl[(2s_n-1)\pi,(2s_n+1)\pi\bigr]$ for all $n\geq 0$.
   Clearly $K$ is closed and nowhere dense. It is known 
   \cite{devaneykrych,expescaping} that 
   $K\cap I(f)$ is path-connected; in fact, $K\cap I(f)$ is the 
   trace of an injective curve $g_{\s}:[0,\infty)\to\C$ or 
   $g_{\s}:(0,\infty)\to\C$ with $g_{\s}(t)\to\infty$ as $t\to\infty$. 
   (This curve is called a \emph{Devaney hair} or a \emph{dynamic ray}.) 
   We remark that, for certain addresses $\s$, the 
    limit set of $g_{\s}(t)$ as $t\to 0$ will not consist of a single
    point
    \cite{indecomposable} (compare also \cite{nonlanding}). Any escaping
    points in this limit set must necessarily lie on $g_{\s}$ themselves. 

   By \cite[Corollary 4.3]{markuslassedierk}, the curve
    $P:=K\cap I(f)=g_{\s}$ is the path-connected 
    component of $I(f)$ containing $z_0$. Since $K$ is closed and nowhere
    dense, the claim follows.
 \end{proof}

\bibliographystyle{hamsplain}
\bibliography{/Latex/Biblio/biblio}

\end{document}